\theoremstyle{plain}
\newtheorem{theorem}{Theorem}
\newtheorem{proposition}{Proposition}[section]
\newtheorem{lemma}[proposition]{Lemma}
\theoremstyle{definition}
\newtheorem{example}[proposition]{Example}
\newtheorem{definition}{Definition}
\title{On finite groups factorised by submodular subgroups}
\author{Victor S. Monakhov}
\author{Irina L. Sokhor}
\affil{Department of Mathematics and Technologies of Programming,\\
Francisk Skorina Gomel State University, Belarus}
\affil{victor.monakhov@gmail.com, irina.sokhor@gmail.com}
\date{}
\begin{document}
\maketitle

{\footnotesize %
\abstract{A subgroup $H$ of a finite group $G$ is submodular in $G$ if there is a subgroup chain 
$H=H_0\leq\ldots\leq H_i\leq H_{i+1}\leq \ldots \leq H_n=G$
such that $H_i$ is a modular subgroup of $H_{i+1}$ for every~$i$.
We investigate finite factorised groups with submodular primary (cyclic primary) subgroups in factors.
We indicate a general approach to the description of finite groups
factorised by supersolvable submodular  subgroups.
}

}

\section{Introduction}
All groups in this paper are finite. The formations of all abelian, nilpotent, supersolvable
and all groups with abelian Sylow subgroups are denoted by~$\mathfrak{A}$, $\mathfrak N$,
$\mathfrak U$  and $\mathcal{A}$, respectively. If $\mathfrak{X}$ is a formation,
then $\mathfrak{X}_1$ is the class of all groups from $\mathfrak{X}$ with square-free exponents.

The concept of a modular subgroup came from the lattice theory.
A subgroup $H$ of a group $G$ is modular in $G$ if $H$ is a modular element of the
lattice of subgroups of $G$. R.~Schmidt~\cite{sch94} conducted a detailed analysis of
the behavior of modular subgroups in a group.
The concept of submodularity is an extension of the concept of modularity, and
it is a transitive relation in contrast to modularity.

\begin{definition} \label{dfsmod}
A subgroup $H$ of a group $G$ is submodular in $G$ if there is a subgroup chain 
\begin{equation}\label{eqkfsn}
H=H_0\leq\ldots\leq H_i\leq H_{i+1}\leq \ldots \leq H_n=G
\end{equation}
such that $H_i$ is a modular subgroup of $H_{i+1}$ for every~$i$,
\cite[p.~546]{sch94}.
\end{definition}

Groups with submodular subgroups were studied in~\cite{zim89,vas15,ms23,ms23arx}.
In particular, groups with submodular Sylow subgroups were described in~\cite{zim89,vas15,ms23}.
We denote the class of such groups by~$\mathfrak{Z}$.
In~\cite{ms23}, there was introduced and investigated the class $\mathfrak{C}$
of all groups in which every cyclic primary subgroup is submodular.
Since a subnormal subgroup is submodular, we get $\mathfrak{Z}\subset \mathfrak{C}$,
and $C^3_7\rtimes Q\in \mathfrak{C}\setminus \mathfrak{Z}$, where $Q$ is a non-abelian subgroup
of order $3^3$ and exponent~$3$ that acts irreducibly on an elementary abelian group~$C_7 ^3$
of order~$7^3$,~\cite[example~3]{ms23}.

In this paper, we investigate a group~$G=AB$ in the assumption that subgroups $A$ and~$B$ are submodular in~$G$.
We obtain the criterions of supersolvability of a group~$G$ with supersolvable subgroups $A$ and~$B$,
see Proposition~\ref{pu}. We indicate a general approach to the description of a group $G=AB$
with generalized supersolvable submodular factors $A$ and~$B$.
We prove the following theorem.

\begin{theorem}\label{cabz}
Let $A$ and $B$ be submodular subgroups of a group $G=AB$ and let $A,B\in\mathfrak{Z}$.
Then $G\in\mathfrak{Z}$ when one of the following conditions holds: 

$(1)$~$(|G : AF(G)|, |G : BF(G)|) = 1$;

$(2)$~$(|A/A^{\mathcal{A}_1}|, |B /B^{\mathcal{A}_1}|) = 1$.
\end{theorem}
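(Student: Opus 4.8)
The plan is to prove directly that every Sylow subgroup of $G$ is submodular, which is exactly the assertion $G\in\mathfrak{Z}$. I would rely on three elementary but decisive properties of submodularity, all available from the preliminaries: it is transitive, so a submodular subgroup of a submodular subgroup of $G$ is submodular in $G$; the image of a submodular subgroup under an epimorphism is again submodular, and conjugates of submodular subgroups are submodular; and for a normal subgroup $N$ of $G$ lying inside $H$, the subgroup $H$ is submodular in $G$ if and only if $H/N$ is submodular in $G/N$. Since $A,B\in\mathfrak{Z}$, each $A_q\in\mathrm{Syl}_q(A)$ is submodular in $A$, and as $A$ is submodular in $G$, transitivity makes $A_q$ submodular in $G$; likewise for the Sylow subgroups of $B$.

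The heart of the matter is a transfer step. Fix a prime $q$ and put $N=O_q(G)\trianglelefteq G$. If some $G_q\in\mathrm{Syl}_q(G)$ satisfies $G_q\leq AF(G)$, then $A_qN$ is a $q$-subgroup of $AF(G)$ of order $|AF(G)|_q=|G|_q$ (here $N\trianglelefteq AF(G)$, so $A_qN$ is a subgroup), hence $A_qN\in\mathrm{Syl}_q(AF(G))$ and $G_q$ is conjugate in $AF(G)$ to $A_qN$, say $G_q=A_q^{\,x}N$. Now $A_q^{\,x}$ is submodular in $G$, so its image $A_q^{\,x}N/N$ is submodular in $G/N$, and by the quotient-detection property $G_q=A_q^{\,x}N$ is submodular in $G$; conjugacy of Sylow subgroups then renders every Sylow $q$-subgroup submodular. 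The identical computation applies with $B$ in place of $A$. Thus, to conclude $G\in\mathfrak{Z}$ it suffices to verify the covering condition: \emph{for every prime $q$, some Sylow $q$-subgroup of $G$ lies in $AF(G)$ or in $BF(G)$}. Primes $q$ with $O_q(G)\in\mathrm{Syl}_q(G)$ are automatic, since $O_q(G)$ is normal, hence submodular.

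Under hypothesis $(1)$ the covering condition is immediate: $(|G:AF(G)|,|G:BF(G)|)=1$ forces, for each $q$, that $q$ divides at most one index, so $AF(G)$ or $BF(G)$ already contains a full Sylow $q$-subgroup of $G$. For hypothesis $(2)$ I would first invoke Proposition~\ref{pu} to secure $G\in\mathfrak{U}$, and then reduce $(2)$ to the same covering condition via the absorption $A^{\mathcal{A}_1}\leq F(G)$ and $B^{\mathcal{A}_1}\leq F(G)$. Granting this, $\overline{A}:=AF(G)/F(G)$ is an epimorphic image of $A/A^{\mathcal{A}_1}$ and $\overline{B}:=BF(G)/F(G)$ of $B/B^{\mathcal{A}_1}$, so any prime dividing both $|\overline{A}|$ and $|\overline{B}|$ would divide both $|A/A^{\mathcal{A}_1}|$ and $|B/B^{\mathcal{A}_1}|$, contradicting $(2)$; hence $(|\overline{A}|,|\overline{B}|)=1$ and $\overline{A}\cap\overline{B}=1$. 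Since $G/F(G)=\overline{A}\,\overline{B}$, each prime $q$ dividing $|G:F(G)|$ then divides only one factor, which places a full Sylow $q$-subgroup of $G/F(G)$, hence of $G$, inside $AF(G)$ or $BF(G)$, and the transfer step finishes the proof.

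The hard part is the absorption claim $A^{\mathcal{A}_1}\leq F(G)$. It has two ingredients: that $A^{\mathcal{A}_1}$ is nilpotent, which I would isolate as a preliminary lemma characterising $\mathfrak{Z}$-groups, and that $A^{\mathcal{A}_1}$ is subnormal in $G$, after which a nilpotent subnormal subgroup necessarily lies in $F(G)$. The subnormality is the genuine obstacle, because a submodular subgroup need not be subnormal — the failure is governed precisely by the nonabelian $P$-group sections of Schmidt's theory, such as $C_q\rtimes C_p$ with $q\equiv 1\pmod p$. The decisive observation to establish is that every such obstructing section has abelian Sylow subgroups and square-free exponent, i.e.\ lies in $\mathcal{A}_1$, and is therefore dissolved on passing from $A$ to $A^{\mathcal{A}_1}$; consequently the submodular subgroup $A^{\mathcal{A}_1}$ carries no $P$-group obstruction and must be subnormal in $G$. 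Turning this into a rigorous implication, presumably through Schmidt's structural description of modular subgroups, is where the real work of case~$(2)$ is concentrated.
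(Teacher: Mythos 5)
Your part (1) is correct and is essentially the paper's own argument: the coprimality hypothesis places, for each prime $q$, a Sylow $q$-subgroup of $G$ inside $AF(G)$ or $BF(G)$, and your conjugation-plus-quotient computation with $N=O_q(G)$ reconstructs by hand what the paper packages as Lemma~\ref{lnilpnormz}\,(1) (there one writes $Q=A_qB_q$ with $B_q\lhd G$ and applies Lemma~\ref{lsubm}\,(4)); the transitivity, conjugation and quotient-detection facts you invoke are all available.

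Part (2), however, is a plan rather than a proof, and it contains two concrete defects. First, the appeal to Proposition~\ref{pu} to ``secure $G\in\mathfrak{U}$'' is unjustified: hypothesis (2) of the theorem, $(|A/A^{\mathcal{A}_1}|,|B/B^{\mathcal{A}_1}|)=1$, implies none of the six conditions of that proposition, nor either hypothesis of its ``Moreover'' clause; moreover that proposition requires supersolvable factors, while the paper only records $\mathfrak{Z}\subset\mathfrak{D}$. (Fortunately $G\in\mathfrak{U}$ is never used in the rest of your argument, so this step should simply be deleted.) Second, and decisively, everything in your case (2) hangs on the absorption claim $A^{\mathcal{A}_1}\le F(G)$, $B^{\mathcal{A}_1}\le F(G)$, whose essential ingredient --- subnormality of $A^{\mathcal{A}_1}$ in $G$ --- you do not prove: you state the correct heuristic (the obstructions to subnormality of a submodular subgroup are sections lying in $\mathcal{A}_1$) and then explicitly defer ``the real work''. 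A claim identified as the crux and left unestablished is a genuine gap. Note also that your phrase ``the submodular subgroup $A^{\mathcal{A}_1}$'' smuggles in a further unproved assertion: a characteristic subgroup of a submodular subgroup is not obviously submodular, so the statement to aim at is subnormality directly.

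For what it is worth, the missing lemma is true and can be proved with the paper's own tools, so your route is completable: refine a submodular chain for $A$ so that every step is a maximal modular subgroup (the device used in the proof of Lemma~\ref{lsmp}); at a non-normal step $M<K$, Lemma~\ref{l512} gives $K/M_K$ nonabelian of order $pq$, hence $K^{\mathcal{A}_1}\le M_K$; combining this with monotonicity of residuals under inclusion ($X\le Y$ implies $X^{\mathcal{A}_1}\le Y^{\mathcal{A}_1}$, as $\mathcal{A}_1$ is subgroup-closed) and with the facts that subnormality passes to intermediate subgroups and through normal subgroups, induction along the chain yields $A^{\mathcal{A}_1}$ subnormal in $G$; nilpotence of $A^{\mathcal{A}_1}$ comes from Lemma~\ref{lzc}\,(2), and Fitting's theorem then gives $A^{\mathcal{A}_1}\le F(G)$. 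With that lemma, your reduction of (2) to (1) works and is genuinely different from the paper, which instead runs a minimal-counterexample argument: quotient stability of hypothesis (2) via Lemma~\ref{lgf}, passage to a primitive group $G=R\rtimes M$, the observation that $AR,BR\in\mathfrak{Z}$ forces $A,B\in\mathcal{A}_1$ and hence $(|A|,|B|)=1$, and finally an application of part (1). Your completed route would avoid the minimal counterexample entirely; as submitted, though, the hard half of the theorem remains unproven.
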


\begin{theorem}\label{cabc}
Let $A$ and $B$ be submodular subgroups of a group $G=AB$ and let $A,B\in\mathfrak{C}$.
The following statements hold.

$(1)$~If $(|G : AF(G)|, |G : BF(G)|) = 1$, then $G\in\mathfrak{C}$.

$(2)$~If $G^{\mathcal{A}}\in\mathfrak{N}$, then $G\in\mathfrak{Z}$.
\end{theorem}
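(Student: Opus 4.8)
The plan is to prove both parts by one induction on $|G|$, using that $\mathfrak{C}$ and $\mathfrak{Z}$ are subgroup-closed saturated formations and that submodularity is transitive, is inherited by intermediate subgroups, and passes to quotients. For part~(1) I would follow the scheme of Theorem~\ref{cabz}(1) verbatim, with $\mathfrak{Z}$ replaced by $\mathfrak{C}$. For a minimal normal subgroup $N$ the factors $AN/N$, $BN/N$ are again submodular and lie in $\mathfrak{C}$, and the coprimeness of the indices is inherited because $F(G)N/N\le F(G/N)$; hence $G/N\in\mathfrak{C}$ by induction. Since $\mathfrak{C}$ is a formation, two distinct minimal normal subgroups give $G\hookrightarrow (G/N_1)\times(G/N_2)\in\mathfrak{C}$, and saturation removes the case $N\le\Phi(G)$, so I may assume $G$ is monolithic with socle $N$; the solubility of $G$ (from the earlier analysis of submodular subgroups) lets me take $N$ to be an elementary abelian $p$-group with $G=N\rtimes M$. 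In this configuration the hypothesis $(|G:AF(G)|,|G:BF(G)|)=1$ guarantees that for each prime $p$ one of $AF(G)$, $BF(G)$ contains a full Sylow $p$-subgroup of $G$; every cyclic $p$-subgroup is then conjugate into that factor, and I would transfer its submodularity from $A$ (or $B$) through the normal nilpotent $F(G)$ by transitivity, concluding $G\in\mathfrak{C}$.

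For part~(2) the decisive preliminary is a bridge between the two classes: I would establish that any $H$ with $H\in\mathfrak{C}$ and $H^{\mathcal{A}}\in\mathfrak{N}$ already lies in $\mathfrak{Z}$. The point is that $H^{\mathcal{A}}\in\mathfrak{N}$ forbids exactly the irreducible non-scalar actions --- as in $C^3_7\rtimes Q$, whose $\mathcal{A}$-residual is not nilpotent --- that separate $\mathfrak{C}$ from $\mathfrak{Z}$. Granting this bridge, I observe that $\mathcal{A}$ is subgroup-closed, so $A^{\mathcal{A}}\le A\cap G^{\mathcal{A}}$ and $B^{\mathcal{A}}\le B\cap G^{\mathcal{A}}$; since $G^{\mathcal{A}}\in\mathfrak{N}$ these residuals are nilpotent, whence $A,B\in\mathfrak{Z}$. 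Thus part~(2) reduces to showing that a product $G=AB$ of submodular $\mathfrak{Z}$-subgroups with $G^{\mathcal{A}}\in\mathfrak{N}$ lies in $\mathfrak{Z}$.

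I would prove this by the same induction. The hypothesis $G^{\mathcal{A}}\in\mathfrak{N}$ is quotient-closed, since $(G/N)^{\mathcal{A}}=G^{\mathcal{A}}N/N$ is an image of a nilpotent group, so every proper quotient $G/N$ satisfies the hypotheses of part~(2) and lies in $\mathfrak{Z}$ by induction. As above I reduce to monolithic $G=N\rtimes M$ with $N$ an elementary abelian $p$-group equal to the socle and $M\cong G/N\in\mathcal{A}$; here either $G\in\mathcal{A}$ or $G^{\mathcal{A}}=N$, and in the latter case $N\le F(G)$ with $M$ having abelian Sylow subgroups. Proposition~\ref{pu} and the established properties of modular subgroups of $p$-groups would be the tools for the Sylow analysis in this final configuration.

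The main obstacle, and the heart of part~(2), is exactly this monolithic step: showing that the Sylow subgroups of $G=N\rtimes M$ are submodular in $G$. The nilpotency of $G^{\mathcal{A}}$ together with $A,B\in\mathfrak{Z}$ must force the action of $M$ on the chief factor $N$ to be by power automorphisms, so that $N\langle x\rangle$ is a $P$-group in Schmidt's sense for every primary $x$ and the relevant Sylow subgroup embeds in a chain of modular subgroups. Verifying that no chief factor incompatible with submodular Sylow subgroups can survive the factorisation $G=AB$ --- equivalently, that the $A$-side and $B$-side information combine to exclude every ``bad module'' --- is the step I expect to demand the most care.
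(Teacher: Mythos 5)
Your proposal is sound in its routine parts: quotient-closure of both hypotheses, the reduction to a primitive (monolithic) group, and, in part~(2), the reduction to $\mathfrak{Z}$-factors via $\mathfrak{Z}=\mathfrak{N}\mathcal{A}\cap\mathfrak{C}$ using $A^{\mathcal{A}}\le A\cap G^{\mathcal{A}}$ (correct, since $\mathcal{A}$ is subgroup-closed). But in both parts the step you dispatch by ``transitivity'' or flag as ``demanding the most care'' is where the actual content lies, and what you sketch there would not work. In part~(1), after placing a cyclic primary subgroup $X$ inside $AF(G)$ you cannot ``transfer its submodularity from $A$ through the normal nilpotent $F(G)$ by transitivity'': $X$ is in general conjugate into neither $A$ nor $F(G)$ (in a Sylow $q$-subgroup of $AF(G)$, which factorises as $A_qO_q(F(G))$ by \cite[VI.4.6]{hup}, a cyclic subgroup can be diagonal, as in $C_4\times C_4$), so no chain through $A$ exists. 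What is really needed is that $AF(G)$ itself lies in $\mathfrak{C}$; this is Lemma~\ref{lnilpnormz}\,(2) of the paper, and your argument cannot avoid it, because it is literally the case $AF(G)=G$ of part~(1), where the coprimality hypothesis becomes vacuous and your induction gives nothing. Proving it needs an ingredient absent from your sketch: by Lemma~\ref{ld} ($\mathfrak{C}\subset\mathfrak{D}$), in the primitive configuration $F(G)$ is a \emph{full} Sylow $r$-subgroup, and only then does the case split work --- cyclic $r$-subgroups lie in $F(G)$ and are subnormal, while cyclic $q$-subgroups, $q\ne r$, are conjugate into $A$ because $|G:A|$ is an $r$-power. (With that lemma in hand, the paper's proof of (1) is a direct five-line argument, no induction: $X\le AF(G)\in\mathfrak{C}$, $AF(G)$ submodular in $G$ by Lemma~\ref{lsubm}\,(4), then transitivity.)

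In part~(2) the monolithic step you call the main obstacle is indeed the heart of the theorem, and your proposed mechanism --- that the hypotheses force $M$ to act on $N$ by power automorphisms, making $N\langle x\rangle$ a P-group --- is not a viable strategy: nothing in the hypotheses forces power automorphisms (in $A_4=C_2^2\rtimes C_3$ the action is irreducible and not by power automorphisms); one must instead refute $G\notin\mathfrak{Z}$ directly. The paper does this with four tools that never appear in your sketch: (i)~the criterion \cite[Theorem~1.3]{ms23}, producing from $G\notin\mathfrak{Z}$ a $\{p,q\}$-subgroup $K$ with $K/\Phi(K)\notin\mathfrak{U}_1$; (ii)~Hall factorisation \cite[VI.4.6]{hup}, giving a Hall $\{p,q\}$-subgroup $H=A_{\{p,q\}}B_{\{p,q\}}$ that inherits all hypotheses, so minimality forces $H=G$, i.e.\ $G=R\rtimes Q$ with $Q$ an abelian Sylow $q$-subgroup acting faithfully and irreducibly on $R$; (iii)~\cite[Theorem~6.21]{isaacs}, making $Q$ cyclic; and (iv)~the decisive point where the factorisation finally enters: a cyclic $q$-group cannot be the product of two proper subgroups, so $Q=A_q$ or $Q=B_q$, whence $Q$ is submodular in the corresponding factor (it is a cyclic primary subgroup of $A\in\mathfrak{C}$) and hence in $G$ by Lemma~\ref{lsubm}\,(1) --- contradicting $G\notin\mathfrak{Z}$. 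Without (i)--(iv) or a substitute, the theorem is not proved. Note also that your ``bridge'' is Lemma~\ref{lzc}\,(2) of the paper, which you may cite; but if you intend to prove it from scratch, its proof is essentially this same argument, so the reduction to $\mathfrak{Z}$-factors relocates the difficulty rather than resolving it.
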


\section{Preliminaries}\label{spr}

If $X$ is a subgroup (proper subgroup, normal subgroup)  of a group~$Y$,
then we write $X\le Y$ ($X<Y$, $X\lhd Y$, respectively).

We say that a group $G$ has a Sylow tower of supersolvable type if
$$
|G|={p_1^{\alpha_1}p_2^{\alpha_2}\ldots p_n^{\alpha_n}},
\  {p_1<p_2<\ldots < p_n}, \ \alpha_i\in \mathbb N, \ \forall i,
$$
and  $G$ has a normal series $G=G_0\ge G_1\ge \ldots \ge G_{n-1}\ge G_n=1$
such that $G_{i-1}/G_i$ is isomorphic to a Sylow $p_i$-subgroup of $G$ for every~$i$.
The class of all groups with Sylow towers of supersolvable type is denoted by $\mathfrak D$.
It is clear that $\mathfrak{D}\subset\mathfrak{S}$.

We say that a group $G$ is a primitive group if $G$ contains a maximal subgroup $M$ with $M_G=1$.
It is easy to check that the following statement is true.

\begin{lemma} \label{lprimitivef}
Let $\mathfrak{F}$ be a saturated formation and let $G$ be a solvable group.
If $G\notin\mathfrak{F}$ and $G/N\in\mathfrak{F}$ for all non-trivial normal subgroups $N$ of $G$,
then $G$ is a primitive group.
\end{lemma}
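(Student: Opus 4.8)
The plan is to exploit the hypothesis that $G$ is a ``minimal'' group outside $\mathfrak F$: it lies outside $\mathfrak F$, yet every proper quotient lies in $\mathfrak F$. The first step is to show that $G$ has a \emph{unique} minimal normal subgroup $N$. Indeed, suppose $N_1$ and $N_2$ were two distinct minimal normal subgroups; then $N_1\cap N_2=1$, while both $G/N_1\in\mathfrak F$ and $G/N_2\in\mathfrak F$ by hypothesis. Since a formation is closed under subdirect products (equivalently, $N_1,N_2\lhd G$ with $G/N_1,G/N_2\in\mathfrak F$ forces $G/(N_1\cap N_2)\in\mathfrak F$), we would get $G=G/(N_1\cap N_2)\in\mathfrak F$, contradicting $G\notin\mathfrak F$. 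As $G\ne 1$, it has at least one minimal normal subgroup, so exactly one such $N$ exists; and because every non-trivial normal subgroup of a finite group contains a minimal normal subgroup, $N$ is in fact contained in every non-trivial normal subgroup of $G$.

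The second, and crucial, step is to locate a maximal subgroup avoiding $N$, and here the saturation of $\mathfrak F$ enters. I claim $N\not\le\Phi(G)$. If instead $N\le\Phi(G)$, then from $\Phi(G/N)=\Phi(G)/N$ one gets $(G/N)/\Phi(G/N)\cong G/\Phi(G)$, which lies in $\mathfrak F$ as a quotient of $G/N\in\mathfrak F$; saturation of $\mathfrak F$ then forces $G\in\mathfrak F$, a contradiction. Hence $N\not\le\Phi(G)$, and since $\Phi(G)$ is the intersection of all maximal subgroups, there is a maximal subgroup $M$ of $G$ with $N\not\le M$.

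Finally, I would check that this $M$ is core-free. The core $M_G$ is a normal subgroup of $G$ contained in $M$; if $M_G\ne 1$, then by the first step $N\le M_G\le M$, contradicting the choice of $M$. Therefore $M_G=1$, so $G$ contains a maximal subgroup with trivial core and is, by definition, a primitive group.

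The argument is short, and the one place that genuinely requires the full strength of the hypotheses is the passage $N\not\le\Phi(G)$: this is exactly where ``saturated'' is indispensable, since in a merely residually closed (non-saturated) formation the minimal normal subgroup could sit inside the Frattini subgroup and $G$ need not be primitive. Solvability is not essential to the reasoning above, but it is harmless and guarantees the familiar structure (the unique $N$ being abelian and complemented) that makes such primitive groups convenient to handle in the applications; the only real care needed is to invoke the correct closure properties, namely subdirect products for a formation and the Frattini condition for a saturated one.
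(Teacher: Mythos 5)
Your proof is correct. The paper in fact gives no proof of this lemma at all (it is introduced with ``It is easy to check that the following statement is true''), and your argument --- uniqueness of the minimal normal subgroup $N$ via closure of formations under subdirect products, then $N\not\le\Phi(G)$ via saturation, then core-freeness of a maximal subgroup avoiding $N$ --- is exactly the standard verification the authors intend; the middle step can even be shortened by noting that $\Phi(G)\ne 1$ would give $G/\Phi(G)\in\mathfrak{F}$ directly from the hypothesis and hence $G\in\mathfrak{F}$ by saturation, so $\Phi(G)=1$.
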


\begin{lemma} [{\cite[Theorem~1.1.7,\,1.1.10]{ballcl}}] \label{lprimitive}
Let $G$ be a solvable primitive group and let $M$ be a maximal subgroup of $G$ with $M_G=1$.
The following statements hold.

$(1)$~$\Phi(G) = 1$.

$(2)$~$G$ contains a unique minimal normal subgroup $N$ such that $N = C_G(N)=F(G)=O_p(G)$ for a prime $p$.

$(3)$~$G=F(G)\rtimes M$ and $O_p(M)=1$.
\end{lemma}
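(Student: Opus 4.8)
The plan is to extract all three assertions from the interplay between the maximality of $M$, the triviality of its core $M_G$, and the solvability of $G$. First I would dispose of $(1)$: since $\Phi(G)$ is the intersection of all maximal subgroups it lies inside $M$, and being normal it is contained in the core, so $\Phi(G)\le M_G=1$.

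For $(2)$ I would pick a minimal normal subgroup $N$ of $G$; solvability forces $N$ to be elementary abelian, hence a $p$-group for some prime $p$. Because $M_G=1$ the subgroup $N$ cannot lie in $M$ (otherwise $N\le M_G=1$), so maximality gives $G=NM$. Now $N\cap M$ is normalised by $M$ and, $N$ being abelian, by $N$ as well, hence by $G=NM$; minimality then forces $N\cap M=1$, so $G=N\rtimes M$. The heart of the argument is the self-centralising property $C_G(N)=N$: writing $C=C_G(N)\ge N$ and applying Dedekind's modular law in the form $C=C\cap NM=N(C\cap M)$, I would observe that $C\cap M$ is centralised by $N$ (as $N\le C$) and normalised by $M$, hence normal in $G=NM$ and therefore trivial, giving $C=N$. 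Uniqueness of $N$ is then immediate: any further minimal normal subgroup $N'$ satisfies $[N,N']\le N\cap N'=1$, so $N'\le C_G(N)=N$ and $N'=N$. Finally $N\le F(G)$, while $N\le Z(F(G))$ (the latter being characteristic in $F(G)$, hence normal and nontrivial in $G$, so containing the unique minimal normal subgroup) yields $F(G)\le C_G(N)=N$; thus $F(G)=N$, and since $N$ is a $p$-group this common value equals $O_p(G)$.

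Part $(3)$ restates $G=F(G)\rtimes M$, already obtained, so it remains to prove $O_p(M)=1$. Here I would set $K=O_p(M)$ and consider $NK$. Since $N\lhd G$ this is a subgroup, and it is a $p$-group because both $N$ and $K$ are; it is normalised by $M$ (which normalises $N$ and the characteristic subgroup $K$) and, since $N\le NK$, trivially by $N$, hence $NK\lhd G$. Being a normal $p$-subgroup it satisfies $NK\le O_p(G)=N$, so $K\le N\cap M=1$.

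I expect the main obstacle to be the self-centralising step $C_G(N)=N$, since both the uniqueness of $N$ and the identification $F(G)=N=O_p(G)$ rest on it; the decisive point is to recognise that $C\cap M$ is simultaneously $N$-invariant and $M$-invariant, so that its normality in $G$ collides with $M_G=1$. The remaining verifications---the closure of $NK$ under conjugation and the passage to $O_p(G)$---are routine once this is in place.
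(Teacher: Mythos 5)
Your proposal is correct and complete: part (1) from $\Phi(G)\le M_G$, the splitting $G=N\rtimes M$, the key self-centralising step $C_G(N)=N$ via Dedekind's law and the observation that $C_G(N)\cap M$ is normalised by both $N$ and $M$ (hence lies in $M_G=1$), and the deductions $F(G)=N=O_p(G)$ and $O_p(M)=1$ are all sound. Note that the paper itself gives no proof of this lemma --- it is quoted directly from Ballester-Bolinches and Ezquerro, Theorems~1.1.7 and~1.1.10 --- so your argument simply supplies, correctly, the standard textbook proof that the citation points to.
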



The following lemma contains the well-known properties of submodular subgroups.

\begin{lemma} [{\cite[Lemma~1]{zim89}}] \label{lsubm}
Let $G$ be a group, let $H$ and $K$ be subgroups of~$G$ and let $N$ be a
normal subgroup of~$G$.

$(1)$~If $H$ is submodular in $K$ and $K$ is submodular in $G$, then $H$
 is submodular in~$G$.

$(2)$~If $H$ is submodular in $G$, then $H\cap K$ is submodular in~$K$.

$(3)$~If $H/N$ is submodular in $G/N$, then $H$ is submodular in~$G$.

$(4)$~If $H$ is submodular in $G$, then $HN/N$ is submodular in $G/N$
and $HN$ is submodular in~$G$.

$(5)$~If $H$ is subnormal in $G$, then $H$ is submodular in~$G$.
\end{lemma}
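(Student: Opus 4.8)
\section*{Proof proposal}

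The plan is to reduce each of the five assertions to a single-step property of a modular subgroup, taken from Schmidt's monograph \cite{sch94}, and then to propagate that property along a defining chain \eqref{eqkfsn}. I would invoke three facts about modular subgroups. (a) If $M$ is modular in $L$ and $T\le L$, then $M\cap T$ is modular in $T$. (b) For $D\lhd L$, modularity is compatible with the correspondence between $L(L/D)$ and the interval $[D,L]$: for any modular $M$ the image $MD/D$ is modular in $L/D$, and conversely, whenever $D\le M$ and $M/D$ is modular in $L/D$, the subgroup $M$ is modular in $L$. (c) Every normal subgroup (indeed every permutable subgroup) is modular. With these in hand, submodularity becomes the transitive, intersection-stable, and quotient-stable closure of modularity, and each part is obtained by manipulating chains.

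Parts $(1)$--$(3)$ are then chain manipulations. For $(1)$ I would concatenate a modular chain from $H$ to $K$ with one from $K$ to $G$, which is again a modular chain, so submodularity is transitive. For $(2)$ I would intersect a modular chain $H=H_0\le\dots\le H_n=G$ with $K$: since $H_i\le H_{i+1}$ we have $H_i\cap(H_{i+1}\cap K)=H_i\cap K$, and fact~(a) applied to the modular subgroup $H_i$ of $H_{i+1}$ with $T=H_{i+1}\cap K$ makes $H_i\cap K$ modular in $H_{i+1}\cap K$, producing a chain from $H\cap K$ to $K$. For $(3)$, where $N\le H\le G$, I would lift a modular chain of $G/N$ through the correspondence theorem to subgroups $N\le H_i\le G$ and use the converse in fact~(b) to turn modularity of $H_i/N$ in $H_{i+1}/N$ into modularity of $H_i$ in $H_{i+1}$, so that $H$ is submodular in $G$.

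For $(4)$ I would push a modular chain of $G$ forward to the subgroups $H_iN/N$. The one delicate point is that $N$ is normal in $G$ but need not lie in, or be normal in, each $H_{i+1}$; so I replace $N$ by $H_{i+1}\cap N$, which is normal in $H_{i+1}$, apply the forward direction of fact~(b) to the modular subgroup $H_i$ of $H_{i+1}$, and transport the result through the isomorphism $H_{i+1}/(H_{i+1}\cap N)\cong H_{i+1}N/N$ to conclude that $H_iN/N$ is modular in $H_{i+1}N/N$. This gives a chain from $HN/N$ to $G/N$, so $HN/N$ is submodular in $G/N$; the remaining claim of $(4)$ then follows immediately from part~$(3)$ applied to the subgroup $HN$, which contains $N$. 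For $(5)$ a subnormal chain $H=H_0\lhd\dots\lhd H_n=G$ is already a modular chain by fact~(c), so $H$ is submodular. Thus the only genuine obstacle is the bookkeeping in part~$(4)$ caused by the mismatch between $N\lhd G$ and the intervals $H_{i+1}$; once facts~(a)--(c) are cited from \cite{sch94} in the precise forms above, every statement reduces to routine chain arithmetic.
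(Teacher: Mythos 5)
Your proposal is correct, and it matches the standard argument: the paper itself states this lemma without proof, citing Zimmermann's Lemma~1, and your reconstruction via Schmidt's three one-step facts about modular subgroups (intersection, image $MD/D$, and the converse lift when $D\le M$, $D\lhd L$) followed by chain manipulation is exactly the argument underlying that citation. In particular, you correctly identify and resolve the only delicate point, in part~$(4)$, by replacing $N$ with $H_{i+1}\cap N\lhd H_{i+1}$ and transporting modularity through the canonical isomorphism $H_{i+1}/(H_{i+1}\cap N)\cong H_{i+1}N/N$, and your deduction of the second claim of $(4)$ from part~$(3)$ is sound.
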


Notice that a subgroup $M$ of a group $G$ is a maximal modular subgroup
if $M$ satisfies the following conditions:
$M$ is a proper subgroup of~$G$;  $M$ is a modular subgroup of~$G$;
if $M<K<G$, then $K$ is not modular in~$G$.

\begin{lemma} [{\cite[Lemma~5.1.2]{sch94}}] \label{l512}
If $M$ is a maximal modular subgroup of a group $G$, then either $M$ is normal in $G$ and
$G/M$ is simple or $G/M_G$ is non-abelian of order $pq$ for primes $p$ and~$q$.
\end{lemma}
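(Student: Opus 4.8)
My plan is to split on the obvious dichotomy, namely whether or not $M\lhd G$, and to do essentially all of the work in the non-normal case after factoring out the core $M_G$.

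If $M$ is normal in $G$, the conclusion is immediate. Every normal subgroup is a modular element of the subgroup lattice, so no subgroup $K$ with $M<K<G$ can be normal: such a $K$ would be modular, contradicting that $M$ is maximal among proper modular subgroups. Hence $M$ is a maximal normal subgroup, $G/M$ has no proper non-trivial normal subgroup, and $G/M$ is simple. So from now on I assume $M$ is not normal and must exhibit primes $p,q$ with $G/M_G$ non-abelian of order $pq$. I would first pass to $\overline G=G/M_G$: using the standard behaviour of modular subgroups under homomorphisms (the modular analogues of Lemma~\ref{lsubm}(3),(4) — images of modular subgroups are modular and full preimages of modular subgroups are modular), $\overline M=M/M_G$ is again maximal modular, is still non-normal, and is now core-free. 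Thus I may and do assume $M_G=1$, and I aim to prove $G$ is non-abelian of order $pq$.

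Pick a minimal normal subgroup $N$ of $G$. The image of $M$ in $G/N$ is modular, so its preimage $MN$ is modular in $G$; since $M_G=1$ forces $N\not\le M$, we have $M<MN$, and maximality gives $MN=G$. Now the modularity of $M$ supplies the projectivity of the transposed intervals $[M\cap N,N]$ and $[M,MN]=[M,G]$, realised by $L\mapsto ML$ and $K\mapsto K\cap N$ (a standard property of modular elements). Consequently, for every $L$ with $M\cap N\le L\le N$ the product $ML$ is a subgroup and $ML\cap N=L\lhd ML$, so $M$ normalises every such $L$; that is, $M$ acts on $N$ by power automorphisms. The rigidity of power automorphisms excludes a non-abelian $N$: an automorphism normalising every subgroup of a non-abelian chief factor is trivial, which would force $M$ to centralise $N$ and, together with $G=MN$ and $M\cap N\le M_G=1$, give $G=M\times N$ and $M\lhd G$, a contradiction. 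Hence $N$ is elementary abelian, say $N\cong C_p^{k}$; then $M\cap N$, being normalised by $M$ and centralised by the abelian $N$, is normal in $G=MN$, so $M\cap N\le M_G=1$. A Dedekind-law computation gives $C_M(N)=1$, so $N=C_G(N)$ and $M$ acts faithfully; and since $M$ normalises every subgroup of $N$, it acts by scalar transformations on $N\cong\mathbb{F}_p^{k}$, whence $M$ embeds into $\mathbb{F}_p^{*}$ and is cyclic of order dividing $p-1$.

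It remains to force the minimal configuration $k=1$ and $|M|$ prime, and this is exactly where I expect the real difficulty to lie. Once $|M|=q$ is prime, $G=C_p^{k}\rtimes C_q$ is a $P$-group in the sense of Iwasawa, every one of whose subgroups is modular; then any proper subspace $1<L<N$ would yield a modular subgroup $M<ML<G$, contradicting the maximality of $M$, so $k=1$ and $|G|=pq$ with $G$ non-abelian. The genuinely hard point is the passage to \emph{prime} order: I must show that the mere modularity of the core-free non-normal $M$ is incompatible with $|M|$ composite (for composite $m$ the lattice of $C_p\rtimes C_m$ is not modular), and more generally that modularity pins $G/M_G$ down to a $P$-group. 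This recognition step cannot be obtained by soft lattice manipulation alone; it is where one must invoke the prior structure theory of modular subgroups (the analogue of Iwasawa's description of modular subgroup lattices). A secondary, purely technical obstacle is the clean exclusion of non-abelian minimal normal subgroups, which I would settle through the rigidity of power automorphisms as indicated above.
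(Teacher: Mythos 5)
First, a point of comparison: the paper does not prove this lemma at all --- it is quoted verbatim from Schmidt's book \cite[Lemma~5.1.2]{sch94} --- so your attempt has to stand entirely on its own, and as written it does not. Your normal case is correct, and your non-normal skeleton (pass to $G/M_G$, take a minimal normal subgroup $N$, get $G=MN$ from maximality, use the projectivity $[M\cap N,N]\to[M,G]$, $L\mapsto ML$) is the right one. But there are two genuine holes. The first is your exclusion of a non-abelian $N$: the projectivity only shows that $M$ normalises the subgroups $L$ with $M\cap N\le L\le N$, whereas the rigidity fact you invoke (a power automorphism of a centerless group is trivial) requires $M$ to normalise \emph{every} subgroup of $N$. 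That would follow from $M\cap N=1$, but you derive $M\cap N\le M_G$ only afterwards, and only by using that $N$ is abelian; so in the one case where you need the rigidity argument, its hypothesis is unverified and the reasoning is circular. This case needs a separate argument (in Schmidt's development it ultimately comes from the structure theory of core-free modular subgroups, whose normal closures are solvable), and none is supplied.

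The second gap you concede yourself, and it is the decisive one: you never prove that $|M|$ is prime. Everything you do establish --- $N\cong C_p^k$ elementary abelian, $M\cap N=1$, $M$ cyclic of order dividing $p-1$ acting faithfully by scalars --- is equally satisfied by $F_5=C_5\rtimes C_4$, whose $C_4$ is \emph{not} modular (this is exactly the paper's closing example), so these constraints alone cannot yield the conclusion; modularity of $M$ must be used once more. Deferring that step to unstated ``prior structure theory'' leaves the proof without its core. Moreover, your claim that it ``cannot be obtained by soft lattice manipulation alone'' is wrong: it can. Since the action is faithful and scalar and $M\cap N=1$, one gets $M\cap M^n=1$ for $1\ne n\in N$ (if $d\in M$ and $d^n\in M$, then $d^{-1}d^n=[d,n]\in M\cap N=1$, and faithfulness gives $d=1$). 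Now suppose $1<M_0<M$ and test the modular law with $X=M_0^n\le Z=M^n$: the left side is $\langle X, M\cap Z\rangle=M_0^n$, of order $|M_0|$; on the other hand, for $1\ne c\in M_0$ with scalar $\lambda\ne 1$ the element $c^{-1}c^n=[c,n]=n^{1-\lambda}$ is a non-trivial element of $\langle n\rangle$, so $\langle X,M\rangle=\langle n\rangle M\supseteq M^n$ and the right side is $\langle X,M\rangle\cap Z=M^n$, of order $|M|$. Thus modularity forces $M_0=M$, i.e.\ $M$ has no proper non-trivial subgroup and $|M|=q$ is prime; after that your P-group argument for $k=1$ does close the proof. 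Without this (or an equivalent) step, the attempt is not a proof.
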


\begin{lemma}[{\cite[Corollary~2.2]{ms23arx}}] \label{labs}
Let $A$ be a solvable submodular subgroup of a group $G$.
If $B$ is a solvable subgroup of a group $G$ and $G=AB$, then $G$ is solvable.
\end{lemma}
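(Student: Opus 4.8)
The plan is to argue by contradiction: let $G$ be a counterexample of minimal order, so $G=AB$ with $A$ solvable and submodular in $G$, $B$ solvable, but $G$ not solvable. The strategy is to combine the inheritance properties of submodularity (Lemma~\ref{lsubm}) with the description of maximal modular subgroups (Lemma~\ref{l512}) in order to locate a proper subgroup of $G$ to which the minimality hypothesis can be applied.

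First I would analyse the normal structure. For any minimal normal subgroup $N$ of $G$ one has $G/N=(AN/N)(BN/N)$, where $AN/N\cong A/(A\cap N)$ is solvable, $BN/N$ is solvable, and $AN/N$ is submodular in $G/N$ by Lemma~\ref{lsubm}(4); minimality then forces $G/N$ to be solvable. If $G$ had two distinct minimal normal subgroups $N_1,N_2$, then $N_1\cap N_2=1$ would embed $G$ into $G/N_1\times G/N_2$, making $G$ solvable. Hence $G$ has a unique minimal normal subgroup $N$. Since $G/N$ is solvable while $G$ is not, $N$ is non-solvable, so $N$ is a direct product of non-abelian simple groups.

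Next I would use submodularity to produce a large proper modular subgroup. Because $G$ is not solvable, $A<G$, so the defining chain~\eqref{eqkfsn} (taken strictly increasing) provides a proper subgroup that is modular in $G$ and contains $A$; choosing one of largest order yields a maximal modular subgroup $M$ with $A\le M<G$. By Lemma~\ref{lsubm}(2), $A=A\cap M$ is submodular in $M$. I would then show $M_G\neq 1$: by Lemma~\ref{l512} either $M\lhd G$ with $G/M$ simple, or $G/M_G$ is non-abelian of order $pq$. In the first case $M_G=M$, and $M=1$ would force $A=1$ and $G=B$ solvable; in the second case $M_G=1$ would give $|G|=pq$, again solvable. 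Both are excluded, so $M_G\neq 1$, and since $N$ is the unique minimal normal subgroup, $N\le M_G\le M$.

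Finally, since $A\le M$, the Dedekind identity gives $M=A(M\cap B)$, a factorisation of a proper subgroup by the solvable submodular subgroup $A$ and the solvable subgroup $M\cap B$. By minimality $M$ is solvable, whence $N\le M$ is solvable, contradicting that $N$ is a product of non-abelian simple groups. The main obstacle is precisely the middle step: guaranteeing that the modular subgroup containing $A$ can be taken with nontrivial core, so that it must contain $N$. This is where Lemma~\ref{l512} is essential, since submodularity alone (unlike subnormality) carries no normality; it is the dichotomy ``normal with simple quotient / core with quotient of order $pq$'' that upgrades a maximal modular subgroup into one large enough to drive the induction.
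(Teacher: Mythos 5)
Your proof is correct, but it cannot be compared step-by-step with the paper's own argument for a simple reason: the paper gives none. Lemma~\ref{labs} is imported wholesale from \cite[Corollary~2.2]{ms23arx}, an external preprint, so your self-contained argument is a genuinely different (and more elementary) route. Checking it: the reduction to a unique minimal normal subgroup $N$, necessarily non-solvable, is sound and uses Lemma~\ref{lsubm}\,(4) correctly to pass the hypotheses to $G/N$; the construction of $M$ is the one delicate point, and you handle it properly --- since $A<G$, the chain~\eqref{eqkfsn} yields a proper modular subgroup of $G$ containing $A$, and among all such subgroups one of largest order is indeed a maximal modular subgroup in the sense required by Lemma~\ref{l512}, because any modular $K$ with $M<K<G$ would again contain $A$ and exceed $M$ in order. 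The dichotomy of Lemma~\ref{l512} then forces $M_G\neq 1$ exactly as you argue (the degenerate cases $M=1$ and $|G|=pq$ both contradict non-solvability of $G$), so $N\le M_G\le M$; and Dedekind's identity $M=A(M\cap B)$ together with Lemma~\ref{lsubm}\,(2) places $M$ under the minimality hypothesis, making $M$ solvable and contradicting $N\le M$. What your approach buys is self-containment: the only external inputs are Schmidt's description of maximal modular subgroups (Lemma~\ref{l512}) and the closure properties of submodularity (Lemma~\ref{lsubm}), both of which the paper already quotes, whereas the paper's version of this lemma rests on a result whose proof the reader must chase into an arXiv preprint. Your closing observation is also the right one to emphasize: since submodularity, unlike subnormality, carries no normality, it is precisely the core dichotomy of Lemma~\ref{l512} that converts a maximal modular subgroup into one containing the unique minimal normal subgroup, which is what makes the induction close.
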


Remind that a subgroup $H$ of a group $G$ is $\mathbb{P}$-subnormal in $G$
if either $H=G$ or there is a subgroup chain~\eqref{eqkfsn} such that $|H_{i+1}:H_i|\in\mathbb{P}$
for all $i$. Here $\mathbb{P}$ is the set of all primes. Properties of $\mathbb{P}$-subnormal subgroups
are detailed in~\cite{vvt2010,mk_rm13,mon_smj16}.

\begin{lemma}\label{lsmp}
If $H$ is a submodular subgroup of a solvable group $G$, then $H$ is $\mathbb{P}$-subnormal in~$G$.
\end{lemma}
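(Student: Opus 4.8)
The plan is to induct on $|G|$ and collapse the submodular chain down to a single maximal modular step, for which Lemma~\ref{l512} together with solvability forces prime index.

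First I would dispose of the trivial case $H=G$, where $H$ is $\mathbb{P}$-subnormal by definition, and assume $H<G$. Since $H$ is submodular, the defining chain~\eqref{eqkfsn} supplies a proper modular subgroup containing $H$, so the set of modular subgroups $K$ with $H\le K<G$ is non-empty; I choose $M$ maximal in this set. Then $M$ is a maximal modular subgroup of $G$ in the sense recalled before Lemma~\ref{l512}: any modular $K$ with $M<K<G$ would contradict the maximality of $M$.

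Next I would show that $|G:M|$ is prime by invoking Lemma~\ref{l512}. If $M\lhd G$ and $G/M$ is simple, then solvability makes $G/M$ cyclic of prime order, so $|G:M|$ is prime. Otherwise $G/M_G$ is non-abelian of order $pq$, say with $p<q$; here $M/M_G$ is core-free in $G/M_G$, hence avoids the normal Sylow $q$-subgroup and is therefore either trivial or a Sylow $p$-subgroup, giving $|G:M|\in\{pq,q\}$. The value $pq$ is excluded, since it would force $M=M_G\lhd G$, and then the preimage in $G$ of the normal Sylow $q$-subgroup of $G/M_G$ would be a normal, hence modular, subgroup strictly between $M$ and $G$, contradicting the maximal modularity of $M$. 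Thus $|G:M|$ is prime in every case, so $M$ is $\mathbb{P}$-subnormal in $G$ through the one-step chain $M<G$.

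Finally, since $H\le M$ and $H$ is submodular in $G$, Lemma~\ref{lsubm}(2) shows that $H=H\cap M$ is submodular in the solvable group $M$, where $|M|<|G|$. By the induction hypothesis $H$ is $\mathbb{P}$-subnormal in $M$, and concatenating such a chain with $M<G$ yields a $\mathbb{P}$-subnormal chain from $H$ to $G$, using the transitivity of $\mathbb{P}$-subnormality inherent in its chain definition. I expect the delicate point to be the order-$pq$ alternative of Lemma~\ref{l512}: the argument must rule out $|G:M|=pq$ by exploiting precisely that $M$ is maximal \emph{among modular subgroups}, since otherwise a normal, therefore modular, subgroup would sit strictly between $M$ and $G$.
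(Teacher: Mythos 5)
Your proof is correct and follows essentially the same route as the paper's: induct on $|G|$, pass to a maximal modular subgroup $M$ containing $H$ (with $H$ still submodular in $M$ via Lemma~\ref{lsubm}\,(2)), obtain $|G:M|\in\mathbb{P}$ from Lemma~\ref{l512} plus solvability, and conclude by transitivity of $\mathbb{P}$-subnormality. The only difference is one of detail, not of approach: the paper simply cites Lemma~\ref{l512} for the prime index, whereas you make explicit the case analysis it relies on (solvable simple quotient is of prime order; in the order-$pq$ case, $M/M_G$ core-free and the normal Sylow $q$-subgroup of $G/M_G$ rule out index $pq$).
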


\begin{proof}
Let $H$ be a proper submodular subgroup of a solvable group $G$.
In view of Definition~\ref{dfsmod}, there is a maximal modular subgroup $M$ of $G$
such that $H$ is a submodular subgroup of $M$. By induction, $H$ is $\mathbb{P}$-subnormal in $M$.
In view of Lemma~\ref{l512}, $|G:M|\in\mathbb{P}$ and $M$ is $\mathbb{P}$-subnormal in $G$.
Therefore $H$ is $\mathbb{P}$-subnormal in $G$ by~\cite[Lemma~3\,(3)]{mk_rm13}.
\end{proof}

\begin{lemma}\label{lzc}
$(1)$~The classes $\mathfrak{Z}$ and $\mathfrak{C}$ are subgroup-closed saturated formations,
$\mathfrak{Z}\subset\mathfrak{C}\subset \mathfrak{D}$.

$(2)$~$\mathfrak{Z}=\mathfrak{N}\mathcal{A}\cap \mathfrak{C}=\mathfrak{N}\mathcal{A}_1\cap \mathfrak{C}$.
\end{lemma}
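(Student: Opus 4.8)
The plan is to dispatch part~$(1)$ by citation together with short structural remarks, and to concentrate the real work on the identity in part~$(2)$. That $\mathfrak{Z}$ and $\mathfrak{C}$ are subgroup-closed saturated formations is established in~\cite{vas15,ms23} and in~\cite{ms23}, respectively, and $\mathfrak{Z}\subseteq\mathfrak{C}$ is exactly the observation recorded in the Introduction: a cyclic primary subgroup lies in a Sylow subgroup $P$, is subnormal in the $p$-group $P$, hence submodular in $P$ by Lemma~\ref{lsubm}\,(5), and then submodular in $G$ by transitivity (Lemma~\ref{lsubm}\,(1)). For $\mathfrak{C}\subseteq\mathfrak{D}$ I would first note that $\mathfrak{C}$-groups are solvable: a non-abelian simple group has no non-trivial proper submodular subgroup, since any maximal modular subgroup $M$ would force, by Lemma~\ref{l512}, that $G/M_G$ is simple or non-abelian of order $pq$, neither of which is compatible with $M_G=1$ in a non-abelian simple group; a standard reduction to a minimal non-solvable section then gives solvability. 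Once solvability is in hand, Lemma~\ref{lsmp} turns submodularity of the cyclic primary subgroups into $\mathbb{P}$-subnormality, and the Sylow tower of supersolvable type follows by the usual argument on the largest prime (alternatively all of this is in~\cite{ms23}).

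For part~$(2)$ the key is to sandwich the three classes. Since $\mathcal{A}_1\subseteq\mathcal{A}$, every group satisfies $G^{\mathcal{A}}\leq G^{\mathcal{A}_1}$, so $G^{\mathcal{A}_1}\in\mathfrak{N}$ forces $G^{\mathcal{A}}\in\mathfrak{N}$ (a normal subgroup of a nilpotent group is nilpotent); hence $\mathfrak{N}\mathcal{A}_1\subseteq\mathfrak{N}\mathcal{A}$ and
\[
\mathfrak{N}\mathcal{A}_1\cap\mathfrak{C}\ \subseteq\ \mathfrak{N}\mathcal{A}\cap\mathfrak{C}.
\]
Thus it suffices to prove the two inclusions
\[
\mathfrak{Z}\ \subseteq\ \mathfrak{N}\mathcal{A}_1\cap\mathfrak{C}
\qquad\text{and}\qquad
\mathfrak{N}\mathcal{A}\cap\mathfrak{C}\ \subseteq\ \mathfrak{Z},
\]
for then all four classes coincide. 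The first of these reduces to $\mathfrak{Z}\subseteq\mathfrak{N}\mathcal{A}_1$, since $\mathfrak{Z}\subseteq\mathfrak{C}$ is already available.

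To prove $\mathfrak{Z}\subseteq\mathfrak{N}\mathcal{A}_1$ I would argue by induction on $|G|$ that $G^{\mathcal{A}_1}$ is nilpotent, using the description of groups with submodular Sylow subgroups in~\cite{zim89,vas15,ms23}. The point I would extract from that description is that, modulo a suitable nilpotent normal subgroup, every Sylow subgroup of a $\mathfrak{Z}$-group becomes elementary abelian: the submodular Sylow subgroups act on the chief factors in an exponent-reducing way, so that the corresponding quotient lies in $\mathcal{A}_1$. Factoring out a minimal normal subgroup and using that $\mathfrak{Z}$ is a saturated formation then confines $G^{\mathcal{A}_1}$ inside that nilpotent normal subgroup, whence $G^{\mathcal{A}_1}\in\mathfrak{N}$.

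The substantive direction is $\mathfrak{N}\mathcal{A}\cap\mathfrak{C}\subseteq\mathfrak{Z}$. Let $G\in\mathfrak{C}$ with $N:=G^{\mathcal{A}}\in\mathfrak{N}$, fix a prime $p$ and $P\in\mathrm{Syl}_p(G)$, and aim to build a modular chain from $P$ to $G$. Since $G/N\in\mathcal{A}\cap\mathfrak{C}$, an induction (whose core claim is $\mathcal{A}\cap\mathfrak{C}\subseteq\mathfrak{Z}$) gives that $PN/N$ is submodular in $G/N$, hence $PN$ is submodular in $G$ by Lemma~\ref{lsubm}\,(3), reducing the task to submodularity of $P$ in $P N=P\ltimes O_{p'}(N)$, that is, to a normal nilpotent $p'$-complemented configuration. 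In both the core configuration ($G\in\mathcal{A}$, so $P$ is abelian) and this residual one, every cyclic subgroup of $P$ is submodular in the ambient group by hypothesis, and I would assemble the chain for $P$ from those of its cyclic subgroups, transporting submodularity through $N$ by Lemma~\ref{lsubm}\,(2),(4). This \emph{assembly} is the main obstacle: submodularity is transitive but is not known to be join-closed, so one cannot simply join the submodular cyclic subgroups of $P$. The condition $G^{\mathcal{A}}\in\mathfrak{N}$ is precisely what repairs it, forcing $P$ to act on the relevant $p'$-factors so that the cyclic chains amalgamate into a single modular chain for $P$; controlling this amalgamation, rather than any formation bookkeeping, is where the weight of the lemma lies, and the example $C_7^3\rtimes Q\in\mathfrak{C}\setminus\mathfrak{Z}$ shows that the hypothesis $G^{\mathcal{A}}\in\mathfrak{N}$ cannot be dropped.
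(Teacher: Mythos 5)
Your high-level architecture for part~(2) --- reducing the four-class identity to the two inclusions $\mathfrak{Z}\subseteq\mathfrak{N}\mathcal{A}_1\cap\mathfrak{C}$ and $\mathfrak{N}\mathcal{A}\cap\mathfrak{C}\subseteq\mathfrak{Z}$ via the easy observation $\mathfrak{N}\mathcal{A}_1\subseteq\mathfrak{N}\mathcal{A}$ --- is exactly the paper's, and part~(1) by citation is acceptable. For $\mathfrak{Z}\subseteq\mathfrak{N}\mathcal{A}_1$ you should simply invoke \cite[Corollary~4.1]{ms23} (which states $G/F(G)\in\mathcal{A}_1$ for $G\in\mathfrak{Z}$) instead of your vague ``exponent-reducing action on chief factors'' sketch; that is a rigor defect but the claim is known. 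The genuine gap is in the substantive inclusion $\mathfrak{N}\mathcal{A}\cap\mathfrak{C}\subseteq\mathfrak{Z}$: after reducing to the submodularity of a Sylow subgroup $P$ in $PN$, you propose to ``assemble'' a modular chain for $P$ from the chains of its cyclic subgroups, you correctly note that submodularity is not join-closed so this assembly is the main obstacle, and then you simply assert that the hypothesis $G^{\mathcal{A}}\in\mathfrak{N}$ ``repairs it'' --- with no argument. That assertion \emph{is} the lemma; leaving it unproved leaves the proof empty at its core. Moreover your induction is circular at its base: when $N=G^{\mathcal{A}}=1$, the ``core claim'' $\mathcal{A}\cap\mathfrak{C}\subseteq\mathfrak{Z}$ is precisely the statement to be proved for $G$ itself, so the reduction to $G/N$ accomplishes nothing in that case.

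The paper's proof avoids any join of cyclic subgroups, and the device it uses is the idea missing from your attempt. Take a minimal counterexample $G\in(\mathfrak{N}\mathcal{A}\cap\mathfrak{C})\setminus\mathfrak{Z}$. Since both classes are subgroup-closed saturated formations, Lemma~\ref{lprimitivef} and Lemma~\ref{lprimitive} force $G$ to be primitive, $G=F(G)\rtimes M$ with $M_G=1$; since $G\in\mathfrak{D}$, $F(G)=R$ is a Sylow $r$-subgroup for $r=\max\pi(G)$, and $G\in\mathfrak{N}\mathcal{A}$ gives $M\cong G/F(G)\in\mathcal{A}$, hence $G\in\mathcal{A}$. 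By \cite[Theorem~1.3]{ms23}, $G\notin\mathfrak{Z}$ produces a $\{p,q\}$-subgroup $K$ with $K/\Phi(K)\notin\mathfrak{U}_1$; a Hall $\{p,q\}$-subgroup of $G$ containing $K$ cannot be proper (minimality plus subgroup-closure would give $K\in\mathfrak{Z}$, a contradiction), so $G=R\rtimes Q$ with $Q$ an abelian Sylow subgroup acting irreducibly on $R$. By \cite[Theorem~6.21]{isaacs}, $Q$ is cyclic --- and now $Q$ is itself a cyclic primary subgroup, so $G\in\mathfrak{C}$ makes $Q$ submodular in $G$ directly, whence $G\in\mathfrak{Z}$, the final contradiction. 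Forcing the relevant Sylow subgroup to be cyclic via primitivity and the irreducible action of an abelian group is what dissolves the amalgamation problem; without some such reduction your proof cannot be completed as written.
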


\begin{proof}
$(1)$~The statement is true in view
of~\cite[Corollary~3.5,\,Proposition~3.3,\,Lemma~3.2]{ms23}.

$(2)$~Let $G\in\mathfrak{Z}$. In view of Statement~(1), $G\in\mathfrak{C}$.
In addition, $G/F(G)\in\mathcal{A}_1$ by~\cite[Corollary~4.1]{ms23}.
Therefore $G\in \mathfrak{N}\mathcal{A}_1\subseteq\mathfrak{N}\mathcal{A}$.
Thus, $G\in\mathfrak{N}\mathcal{A}_1\cap\mathfrak{C}\subseteq\mathfrak{N}\mathcal{A}\cap\mathfrak{C}$.

Conversely, suppose that $G$ is a group of least order such that
$G\in (\mathfrak{N}\mathcal{A}\cap \mathfrak{C})\setminus\mathfrak{Z}$.
In view of Statement~(1), $G$ has a Sylow tower of supersolvable type.
Since $\mathfrak{N}\mathcal{A}\cap\mathfrak{C}$ and $\mathfrak{Z}$ are
subgroup-closed saturated formations, we deduce that $G$ is a primitive group,
and $G=F(G)\rtimes M$, where $M$ is a maximal subgroup of $G$ with $M_G=1$,
by Lemma~\ref{lprimitivef} and Lemma~\ref{lprimitive}. From $G\in\mathfrak{D}$,
it follows that $F(G)=R$ is a Sylow $r$-subgroup of $G$ for $r=\max\pi(G)$, and $R\in\mathfrak{A}_1$.
Since $G\in\mathfrak{N}\mathcal{A}$, we get $G^\mathcal{A}\leq F(G)$ and
$M\cong G/F(G)\in\mathcal{A}$. Consequently, $G\in\mathcal{A}$. From $G\notin\mathfrak{Z}$
and~\cite[Theorem~1.3]{ms23}, it follows that $G$ contains a $\{p,q\}$-subgroup $K$ such that
$K/\Phi(K)\notin\mathfrak{U}_1$. Let $H$ be a Hall $\{p,q\}$-subgroup of $G$ that contains $K$.
Suppose that $H$ is a proper subgroup of~$G$. In that case, $H\in \mathfrak{Z}$ by induction,
and $K\in\mathfrak{Z}$. Consequently, $K/\Phi(K)\in\mathfrak{U}_1$  in view of~\cite[Theorem~1.3]{ms23},
a contradiction. Hence $G=R\rtimes Q$, where $Q$ is an abelian Sylow subgroup of~$G$
that irreducible acts on $R$. Consequently, $Q$ is a cyclic subgroup by~\cite[Theorem~6.21]{isaacs}.
From $G\in\mathfrak{C}$, it follows that $Q$ is submodular in $G$ and $G\in\mathfrak{Z}$, a contradiction.
Thus, $\mathfrak{Z}=\mathfrak{N}\mathcal{A}\cap\mathfrak{C}$.
\end{proof}

\begin{example}
The Frobenius group $F_5=C_5\rtimes C_4\in \mathcal{A}$
and $F_5^{\mathcal{A}_1}=C_5\rtimes C_2\notin \mathfrak N$.
Hence $F_5\in \mathfrak{N}\mathcal{A}\setminus \mathfrak{N}\mathcal{A}_1$,
and $\mathfrak{N}\mathcal{A}_1\ne \mathfrak{N}\mathcal{A}$ in general.
\end{example}

We also need the following lemma.

\begin{lemma}\label{lgf}
Let $\mathfrak{F}$ be a formation. If $A$ and $N$ are subgroups of a group $G$, $N\lhd G$,
then $(AN/N)^\mathfrak{F}=A^\mathfrak{F}N/N$.
\end{lemma}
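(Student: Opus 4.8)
The plan is to exploit the defining minimality property of the $\mathfrak{F}$-residual together with the correspondence theorem for the normal subgroup $N$. Recall that for a formation $\mathfrak{F}$ the residual $X^\mathfrak{F}$ is the unique smallest normal subgroup of $X$ whose quotient lies in $\mathfrak{F}$, and consequently $X^\mathfrak{F}$ is contained in every normal subgroup $M\lhd X$ with $X/M\in\mathfrak{F}$. Applying the correspondence theorem to $N\lhd AN$, the residual $(AN/N)^\mathfrak{F}$ has the form $K/N$, where $K$ is the smallest normal subgroup of $AN$ that contains $N$ and satisfies $AN/K\in\mathfrak{F}$. So the goal reduces to proving $K=A^\mathfrak{F}N$, which I would establish by a two-sided containment.

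For the inclusion $(AN/N)^\mathfrak{F}\le A^\mathfrak{F}N/N$, I would first check that $A^\mathfrak{F}N/N$ is a normal subgroup of $AN/N$ with quotient in $\mathfrak{F}$. Since $N\lhd G$, the intersection $A\cap N$ is normal in $A$, and $A^\mathfrak{F}$ is normal (indeed characteristic) in $A$; hence $A^\mathfrak{F}(A\cap N)\lhd A$, and passing through the isomorphism $A/(A\cap N)\cong AN/N$ shows $A^\mathfrak{F}N/N\lhd AN/N$. Next, by Dedekind's modular law $A\cap A^\mathfrak{F}N=A^\mathfrak{F}(A\cap N)$, so $(AN/N)\big/(A^\mathfrak{F}N/N)\cong AN/A^\mathfrak{F}N\cong A/\big(A^\mathfrak{F}(A\cap N)\big)$, which is a quotient of $A/A^\mathfrak{F}\in\mathfrak{F}$ and therefore lies in $\mathfrak{F}$ by closure of formations under homomorphic images. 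Minimality of the residual then forces $(AN/N)^\mathfrak{F}\le A^\mathfrak{F}N/N$.

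For the reverse inclusion, write $K/N=(AN/N)^\mathfrak{F}$, so that $K\lhd AN$, $N\le K$ and $AN/K\in\mathfrak{F}$. Because $N\le K$ we have $AK=AN$, whence $A/(A\cap K)\cong AK/K=AN/K\in\mathfrak{F}$; moreover $A\cap K\lhd A$ since $K\lhd AN\ge A$. Minimality of $A^\mathfrak{F}$ now gives $A^\mathfrak{F}\le A\cap K\le K$, and combined with $N\le K$ this yields $A^\mathfrak{F}N\le K$, i.e.\ $A^\mathfrak{F}N/N\le (AN/N)^\mathfrak{F}$. The two containments give the claimed equality.

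I expect the only delicate points to be purely formal: verifying that $A^\mathfrak{F}N/N$ is genuinely normal in $AN/N$ (handled by reducing to $A/(A\cap N)$ and using that both $A^\mathfrak{F}$ and $A\cap N$ are normal in $A$) and the clean application of Dedekind's modular law to identify $AN/A^\mathfrak{F}N$ with $A/\big(A^\mathfrak{F}(A\cap N)\big)$. No structure theory is needed; the argument rests entirely on the two closure axioms of a formation — stability under quotients and the resulting well-definedness and minimality of the residual.
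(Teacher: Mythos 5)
Your proof is correct, but it takes a different route from the paper. The paper disposes of the lemma in two lines by citing known results (Lemma~2.2.8 of Ballester-Bolinches and Ezquerro, \emph{Classes of Finite Groups}): writing $H=AN$, it invokes the quotient formula $(H/N)^{\mathfrak{F}}=H^{\mathfrak{F}}N/N$ and then the fact that $H^{\mathfrak{F}}N=A^{\mathfrak{F}}N$ whenever $H=AN$ with $N\lhd H$. You instead prove the statement from first principles by a double containment: for $(AN/N)^{\mathfrak{F}}\le A^{\mathfrak{F}}N/N$ you verify normality of $A^{\mathfrak{F}}N/N$ by pushing $A^{\mathfrak{F}}(A\cap N)\lhd A$ through the surjection $A\to AN/N$, identify the quotient via Dedekind's law as an image of $A/A^{\mathfrak{F}}$, and apply minimality of the residual; for the reverse containment you pull the residual $K/N$ back to $A\cap K\lhd A$ with $A/(A\cap K)\cong AN/K\in\mathfrak{F}$ and again use minimality. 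All the individual steps check out, including the two places where care is needed (the normality of $A^{\mathfrak{F}}N$ in $AN$, which is not obvious directly but follows cleanly from your reduction, and the identity $AK=AN$ coming from $N\le K$). In substance you have re-proved the two cited facts, so the mathematical content overlaps heavily; what your version buys is self-containedness and an explicit accounting of which formation axioms are used (closure under quotients, plus closure under subdirect products to make the residual well-defined and minimal), while the paper's version buys brevity by outsourcing exactly these verifications to the literature.
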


\begin{proof}
Since $N$ is a normal subgroup of $G$, we get $H=AN\leq G$.
In view of~\cite[Lemma~2.2.8\,(1)]{ballcl},
$$
(AN/N)^\mathfrak{F}=(H/N)^\mathfrak{F}=H^\mathfrak{F}N/N=A^\mathfrak{F}N/N,
$$
since $A^\mathfrak{F}N=H^\mathfrak{F}N$ by~\cite[Lemma~2.2.8\,(2)]{ballcl}.
\end{proof}

\section{The Main Results}

Groups factorised by $\mathbb{P}$-subnormal subgroups were investigated
in~\cite{vvt2012,montr20ca,montr20,mon2022mz}. The review of these results with proofs
was presented in the monograph of A.\,A.~Trofimuk~\cite{tr21}.

Lemma~\ref{labs} and Lemma~\ref{lsmp} allow to apply known facts about
factorised groups
with $\mathbb{P}$\nobreakdash-\hspace{0pt}subnormal factors to groups factorised by
submodular subgroups.
In particular, we have

\begin{lemma}\label{ld}
Let $A$ and $B$ be submodular subgroups of a group $G$ and let $G=AB$.
If $A$ and $B$ has a Sylow tower of supersolvable type,
then $G$ has a Sylow tower of supersolvable type.
\end{lemma}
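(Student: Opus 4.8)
The plan is to reduce the statement to the established theory of factorised groups with $\mathbb{P}$-subnormal factors, exactly as the paragraph preceding this lemma suggests. The proof should be short once the correct reductions are in place, so the bulk of the work is transferring the hypotheses into the right language.

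First I would observe that a group with a Sylow tower of supersolvable type is solvable, since $\mathfrak{D}\subset\mathfrak{S}$; hence both $A$ and $B$ are solvable. As $A$ is a solvable submodular subgroup and $B$ is a solvable subgroup with $G=AB$, Lemma~\ref{labs} yields that $G$ itself is solvable. This is the crucial first step, because the passage from submodularity to $\mathbb{P}$-subnormality is only available inside a solvable group. With $G$ now known to be solvable and $A,B$ submodular in $G$, I would apply Lemma~\ref{lsmp} to conclude that $A$ and $B$ are $\mathbb{P}$-subnormal in $G$. At this point the hypotheses have been translated entirely into the language of $\mathbb{P}$-subnormal factors: $G=AB$ with $A$ and $B$ both $\mathbb{P}$-subnormal in $G$ and $A,B\in\mathfrak{D}$.

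Finally I would invoke the known result on products of $\mathbb{P}$-subnormal subgroups with Sylow towers of supersolvable type from the cited literature (\cite{vvt2012,montr20ca,montr20,mon2022mz}, surveyed in~\cite{tr21}), which states precisely that such a product again lies in $\mathfrak{D}$. The main obstacle is not in any calculation but in pinning down the exact reference whose hypotheses match ours verbatim. Should no single statement do so, I would supply a short induction on $|G|$: setting $r=\max\pi(G)$, each factor has a normal Sylow $r$-subgroup by its Sylow tower, and the $\mathbb{P}$-subnormality of $A$ and $B$ together with $G=AB$ should force $O_r(G)$ to contain a Sylow $r$-subgroup of $G$; passing to $G/O_r(G)$, whose factors inherit both $\mathbb{P}$-subnormality (by Lemma~\ref{lsubm}\,(4), transferred via $\mathbb{P}$-subnormality) and membership in $\mathfrak{D}$, the induction closes. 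Either way the conclusion $G\in\mathfrak{D}$ follows.
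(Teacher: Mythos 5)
Your proposal is correct and follows exactly the paper's own proof: Lemma~\ref{labs} gives solvability of $G$ (using that $\mathfrak{D}\subset\mathfrak{S}$ makes $A$ and $B$ solvable), Lemma~\ref{lsmp} converts submodularity into $\mathbb{P}$-subnormality, and the known product theorem for $\mathbb{P}$-subnormal factors with Sylow towers of supersolvable type (the paper cites \cite[Theorem~4.4]{vvt2012}) finishes the argument. The inductive fallback you sketch is unnecessary, since that cited theorem matches the hypotheses verbatim.
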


\begin{proof}
By Lemma~\ref{labs}, $G$ is solvable. Hence $A$ and~$B$ are $\mathbb{P}$-subnormal in~$G$
in view of Lemma~\ref{lsmp}. By~\cite[Theorem~4.4]{vvt2012}, $G$ has a Sylow tower of supersolvable type.
\end{proof}

Remind that a group $G$ is a siding group if every subgroup of the derived subgroup $G'$ is normal in $G$.
Every siding group is supersolvable. Metacyclic groups, $t$-groups
(solvable groups in which every subnormal subgroup is normal) are siding-groups.
By~$\mathfrak{B}(G)$ we denote the intersection of all normal subgroups $N$ of a group $G$
such that $|\pi(G/N)|\leq 2$.

\begin{proposition}\label{pu}
Let $A$ and $B$ be supersolvable submodular subgroups of a group $G$ and let $G=AB$.
Then~$G$ is supersolvable when one of the following conditions holds: 

$(1)$~$G^\prime$ is nilpotent;

$(2)$~$|G:A|=r^\alpha$, $r\in\pi(G)$, $G$ is $r$-closed;

$(3)$~$|G:A|=r^\alpha$, $r=\max \pi(G)$,

$(4)$ $B$ is nilpotent and normal in $G$;

$(5)$ $B$ is nilpotent and $|G:B|\in\mathbb{P}$;

$(6)$ $B$ is normal in $G$ and a siding group.

Moreover, $G^\mathfrak{U}=G^{\mathfrak{N}^2}\cap\mathfrak{B}(G)$
if either $G^\mathcal{A}\in\mathfrak{N}$ or $(|G:A|,|G:B|)=1$.
\end{proposition}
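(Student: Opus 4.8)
The unifying idea is to convert submodularity into $\mathbb{P}$\nobreakdash-subnormality and then feed the group into the factorisation theory of products of $\mathbb{P}$-subnormal supersolvable subgroups. Since $A,B\in\mathfrak{U}$ are solvable and $G=AB$, Lemma~\ref{labs} gives that $G$ is solvable, and then Lemma~\ref{lsmp} turns the submodularity of $A$ and $B$ into $\mathbb{P}$-subnormality. Thus in every case $G$ is a solvable group which is the product of two $\mathbb{P}$-subnormal supersolvable subgroups, and the results of \cite{vvt2012,montr20ca,montr20,mon2022mz} (surveyed in \cite{tr21}) become applicable.

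With this reduction in hand, each of the conditions $(1)$--$(6)$ matches a known sufficient condition for such a product to be supersolvable: $(1)$ is the nilpotent-derived-subgroup criterion; $(2)$ and $(3)$ are the prime-power-index criteria, combined respectively with $r$-closure and with $r=\max\pi(G)$; and $(4)$--$(6)$ treat the situation in which the second factor is normal (or of prime index) and is, in addition, nilpotent or siding. For each item I would only verify that the stated hypothesis transfers verbatim to the $\mathbb{P}$-subnormal setting and then quote the corresponding statement; no new difficulty arises here, since solvability and $\mathbb{P}$-subnormality have already been secured.

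The substance lies in the residual identity. The essential preliminary remark is that a supersolvable group has nilpotent derived subgroup, so $\mathfrak{U}\subseteq\mathfrak{N}^2$ and therefore $G^{\mathfrak{N}^2}\leq G^{\mathfrak{U}}$; consequently $G^{\mathfrak{N}^2}\cap\mathfrak{B}(G)\leq G^{\mathfrak{U}}$ holds automatically, and only the reverse inclusion needs proof. I would reduce the whole identity to the single claim that every section $G/N$ with $|\pi(G/N)|\leq 2$ is supersolvable. Such a $G/N$ equals $(AN/N)(BN/N)$, where by Lemma~\ref{lsubm}\,(4) each factor is submodular in $G/N$, hence $\mathbb{P}$-subnormal by Lemma~\ref{lsmp} (as $G/N$ is solvable), and both factors are supersolvable. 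Moreover the extra hypothesis is inherited: $(G/N)^{\mathcal{A}}=G^{\mathcal{A}}N/N$ is nilpotent whenever $G^{\mathcal{A}}\in\mathfrak{N}$ (Lemma~\ref{lgf} with $A=G$), while $|G/N:AN/N|$ and $|G/N:BN/N|$ divide $|G:A|$ and $|G:B|$ and so remain coprime. Thus $G/N$ is again a biprimary product of the present type, and its supersolvability is the biprimary case of the factorisation theorems.

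Granting this claim, $G/\mathfrak{B}(G)$ embeds in a direct product of the supersolvable groups $G/N$ and is therefore supersolvable, which gives $G^{\mathfrak{U}}\leq\mathfrak{B}(G)$; and $G/G^{\mathfrak{N}^2}$ is metanilpotent with all of its $\{p,q\}$-quotients supersolvable, so by the two-prime (local) criterion for supersolvability it is supersolvable, giving $G^{\mathfrak{U}}\leq G^{\mathfrak{N}^2}$. Intersecting the two bounds yields $G^{\mathfrak{U}}\leq G^{\mathfrak{N}^2}\cap\mathfrak{B}(G)$, whence equality. I expect the real work to lie exactly in this last circle of ideas: proving that a biprimary product of $\mathbb{P}$-subnormal supersolvable subgroups is supersolvable precisely under the hypothesis $G^{\mathcal{A}}\in\mathfrak{N}$ or $(|G:A|,|G:B|)=1$, and that a metanilpotent group whose biprimary quotients are all supersolvable is itself supersolvable. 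These two alternative hypotheses are what exclude the $A_4$-type obstruction of a metanilpotent, non-supersolvable biprimary section, and checking that they do so, and that they pass to the relevant quotients, is the delicate point.
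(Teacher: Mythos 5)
Your treatment of statements $(1)$--$(6)$ coincides with the paper's: both establish solvability via Lemma~\ref{labs}, convert submodularity of $A$ and $B$ into $\mathbb{P}$-subnormality via Lemma~\ref{lsmp}, and then quote the factorisation results of \cite{vvt2012,montr20}. The gap is in the ``Moreover'' part. The paper disposes of the residual identity by citing \cite[Theorem~3.3]{montr20}, which is exactly the statement $G^{\mathfrak{U}}=G^{\mathfrak{N}^2}\cap\mathfrak{B}(G)$ for products of $\mathbb{P}$-subnormal supersolvable subgroups under the two alternative hypotheses; you instead try to rederive it from two claims you leave unproven. The first claim (the biprimary case) is not simpler than the theorem --- for a group with $|\pi(G)|\le 2$ one has $\mathfrak{B}(G)=1$, so the identity reduces precisely to ``$G$ is supersolvable'' --- hence deferring it to ``the biprimary case of the factorisation theorems'' is circular rather than a reduction.

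The second claim is simply false: there is no ``two-prime (local) criterion'' asserting that a metanilpotent group whose quotients with at most two prime divisors are all supersolvable is itself supersolvable. Take $G=C_2^4\rtimes C_{15}$, where $C_{15}$ acts on $C_2^4$ as the Singer cycle, i.e.\ as $\mathrm{GF}(16)^{\times}$ on the additive group of $\mathrm{GF}(16)$, hence irreducibly and fixed-point-freely. Then $F(G)=C_2^4$ and $G/F(G)\cong C_{15}$, so $G\in\mathfrak{N}^2$; the normal subgroups of $G$ are $1$, $C_2^4$, $C_2^4\rtimes C_3$, $C_2^4\rtimes C_5$ and $G$, so every proper quotient of $G$ is cyclic and in particular every quotient $G/N$ with $|\pi(G/N)|\le 2$ is supersolvable; yet $G\notin\mathfrak{U}$, because $C_2^4$ is a chief factor of order $16$. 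Consequently your step ``$G/G^{\mathfrak{N}^2}$ is metanilpotent with all biprimary quotients supersolvable, hence supersolvable'' is a non sequitur, and no purely quotient-theoretic principle can close it: one must exploit the factorisation itself (equivalently, the $\mathbb{P}$-subnormality of Sylow subgroups it forces), which is what the proof of \cite[Theorem~3.3]{montr20} does. Note that trying to repair the step by using that $G/G^{\mathfrak{N}^2}$ is again a product of $\mathbb{P}$-subnormal supersolvable subgroups inheriting the hypotheses only asks for the metanilpotent case of the very identity being proven, so it is circular as well. The clean fix is to do what the paper does: quote \cite[Theorem~3.3]{montr20}, or else reprove it by a genuine minimal-counterexample argument on the product structure.
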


\begin{proof}
By Lemma~\ref{labs} and Lemma~\ref{lsmp}, $G$ is solvable, $A$ and~$B$ are $\mathbb{P}$-subnormal in~$G$.
In view of~\cite[Corollary~4.7.2]{vvt2012} for Statement~$(1)$ and
\cite[Theorem~3.4,\,Corollary~3.5,\,Theorem~3.7\,(1)--(3)]{montr20} for Statements~$(2)$--$(6)$, respectively,
and also~\cite[Theorem~3.3]{montr20}, we get the conclusion of the proposition.
\end{proof}

Similarly, using theorems of papers~\cite{vvt2012,montr20ca,mon2022mz,montr20},
we can obtain the results for cases when factors~$A$ and~$B$ of a
group~$G=AB$ are submodular
and belong to the formations of all $r$\nobreakdash-\hspace{0pt}supersolvable groups, $\mathrm{w}$-supersolvable groups,
$\mathrm{v}$-supersolvable groups, $\mathrm{sh}$-supersolvable groups.

\begin{lemma}\label{lnilpnormz}
Let $A$ be a submodular subgroup of a group $G$,
let $B$ be a nilpotent normal subgroup, and let $G = AB$.
The following statements hold.

$(1)$~If $A\in\mathfrak{Z}$, then $G\in\mathfrak{Z}$.

$(2)$~If $A\in\mathfrak{C}$, then $G\in\mathfrak{C}$.
\end{lemma}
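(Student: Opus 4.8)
The plan is to treat the two parts separately, since membership in $\mathfrak{Z}$ is detected on Sylow subgroups while membership in $\mathfrak{C}$ is detected on cyclic primary subgroups, and to exploit throughout that $B$ nilpotent and normal forces each Sylow subgroup $B_p$ of $B$ to be characteristic in $B$, hence normal in $G$. For Statement $(1)$ I would argue directly. Fix a prime $p$ and a Sylow $p$-subgroup $P_A$ of $A$. Since $A\in\mathfrak{Z}$, $P_A$ is submodular in $A$, and as $A$ is submodular in $G$, transitivity (Lemma~\ref{lsubm}$(1)$) gives $P_A$ submodular in $G$; because $B_p\lhd G$, Lemma~\ref{lsubm}$(4)$ then yields that $P_AB_p$ is submodular in $G$. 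A short order count (using $G/B\cong A/(A\cap B)$ and the fact that $A\cap B_p\lhd A$) verifies that $P_AB_p$ is actually a Sylow $p$-subgroup of $G$. Since submodularity is conjugation-invariant and Sylow $p$-subgroups are conjugate, every Sylow $p$-subgroup of $G$ is submodular; letting $p$ range over $\pi(G)$ gives $G\in\mathfrak{Z}$.

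For Statement $(2)$ the naive reduction ``$X$ submodular in $XB$'' for an arbitrary cyclic primary $X$ fails (an irreducible action of a cyclic group on an elementary abelian group produces a non-submodular complement), so I would instead run a minimal counterexample argument. Let $G$ be of least order with $G=AB$, $A\in\mathfrak{C}$ submodular, $B$ nilpotent normal, and $G\notin\mathfrak{C}$. Then $G$ is solvable by Lemma~\ref{labs}, and $G\in\mathfrak{D}$ by Lemma~\ref{ld} (note $B$ is submodular, being normal). For every nontrivial $K\lhd G$ the triple $(G/K,\,AK/K,\,BK/K)$ satisfies the same hypotheses, with $AK/K\cong A/(A\cap K)\in\mathfrak{C}$ submodular in $G/K$ (Lemma~\ref{lsubm}$(4)$) and $BK/K$ nilpotent normal, so minimality forces $G/K\in\mathfrak{C}$. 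Since $\mathfrak{C}$ is a saturated formation, Lemma~\ref{lprimitivef} makes $G$ primitive, and Lemma~\ref{lprimitive} supplies a unique minimal normal subgroup $N=C_G(N)=F(G)=O_r(G)$ with $\Phi(G)=1$ and $G=N\rtimes M$.

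The structural payoff comes next. Because $G\in\mathfrak{D}$ and $F(G)=O_r(G)$, the top of the Sylow tower forces $r=\max\pi(G)$ and $N$ to be the entire Sylow $r$-subgroup of $G$, so $M\cong G/N$ is an $r'$-group. As $B$ is nilpotent normal we have $B\le F(G)=N$, and $B\ne 1$ contains a minimal normal subgroup of $G$, which must be $N$; hence $B=N$ and $G=AN$. Then $A/(A\cap N)\cong G/N$ is an $r'$-group, so for each prime $p\ne r$ one has $|A|_p=|G|_p$, i.e.\ $A$ contains a full Sylow $p$-subgroup of $G$. Now I would verify that every cyclic primary subgroup of $G$ is submodular: a cyclic $r$-subgroup lies in the abelian normal subgroup $N$, hence is subnormal and so submodular (Lemma~\ref{lsubm}$(5)$); a cyclic $p$-subgroup with $p\ne r$ is conjugate into a Sylow $p$-subgroup contained in $A$, where it is submodular because $A\in\mathfrak{C}$, and therefore submodular in $G$ by transitivity with ``$A$ submodular in $G$'' (Lemma~\ref{lsubm}$(1)$). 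Thus $G\in\mathfrak{C}$, contradicting the choice of $G$.

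The main obstacle is Statement $(2)$: one cannot proceed subgroup-by-subgroup, because a single cyclic primary subgroup need not be submodular in the subgroup it generates together with $B$. The decisive step is to force, via primitivity together with membership in $\mathfrak{D}$, the very rigid configuration $B=N=F(G)=O_r(G)$ equal to the top Sylow subgroup; only in that configuration does every cyclic primary subgroup of order prime to $r$ become conjugate into the factor $A$, so that the hypothesis $A\in\mathfrak{C}$ and the submodularity of $A$ in $G$ can at last be combined.
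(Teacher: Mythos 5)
Your proposal is correct and takes essentially the same route as the paper's own proof: part $(1)$ by producing a Sylow $p$-subgroup of $G$ as the product $P_AB_p$ of a submodular Sylow subgroup of $A$ with the normal Sylow subgroup of $B$ (the paper cites Huppert VI.4.6 for this factorisation where you give a direct order count) and invoking conjugacy, and part $(2)$ by a minimal counterexample reduced via saturation to a primitive group in which $B=F(G)$ is the Sylow $r$-subgroup for $r=\max\pi(G)$, so that every cyclic primary subgroup is either subnormal in $F(G)$ or conjugate into $A$ and hence submodular by transitivity. The only difference is cosmetic: where the paper writes ``without loss of generality $A=M$'' and ``without loss of generality $X\leq A$'', you justify the same reductions explicitly through the unique minimal normal subgroup and conjugation-invariance of submodularity.
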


\begin{proof}[Proof]
$(1)$~Let $G\notin\mathfrak{Z}$. In that case, $G$ contains a Sylow $q$-subgroup $Q$ that is not
submodular in $G$. In view of~\cite[VI.4.6]{hup}, $Q=A_qB_q$, where $A_q$ is a Sylow $q$-subgroup of~$A$
and $B_q$ is a Sylow $q$-subgroup of $B$. Since $A\in\mathfrak{Z}$, we get $A_q$ is submodular in $A$.
Consequently, $A_q$ is submodular in $G$ by Lemma~\ref{lsubm}\,(1). From $B\in\mathfrak{N}$, it follows that
$B_q$ is a characteristic subgroup of $B$. Since $B$ is normal in $G$, we get $B_q$ is normal in $G$.
Consequently, $Q=A_qB_q$ is submodular in $G$ by Lemma~\ref{lsubm}\,(4),
a contradiction.

$(2)$~Assume that $G$ is a counterexample of least order.
By Lemma~\ref{lzc}\,(1), $\mathfrak{C}\subset \mathfrak{D}$,
it follows that $G$ has a Sylow tower of supersolvable type in view of Lemma~\ref{ld}.
By induction, $G/N\in\mathfrak{C}$ for every non-trivial normal subgroup $N$.
Since $\mathfrak{C}$ is a saturated formation by Lemma~\ref{lzc}\,(1),
it follows that $G=F(G)\rtimes M$, where $M$ is a maximal subgroup of~$G$
with $M_G=1$, by Lemma~\ref{lprimitivef} and Lemma~\ref{lprimitive}.
From $G\in\mathfrak{D}$, it follows that $F(G)=R$ is a Sylow $r$-subgroup of $G$ for $r=\max\pi(G)$.
Since $B$ is a nilpotent normal subgroup of $G$, we have $B=F(G)=R$.
Hence without loss of generality we can assume that $A=M$.
By the choice of $G$, there is a cyclic primary subgroup $X$ that is not submodular in $G$.
If $X\leq F(G)$, then $X$ subnormal in $G$. Hence by Lemma~\ref{lsubm}\,(5),
$X$ is submodular in $G$, a contradiction. Therefore without loss of generality we can assume that
$X\leq A$. From $A\in\mathfrak{C}$, it follows that $X$ is submodular in $A$.
Consequently, $X$ is submodular in $G$ by Lemma~\ref{lsubm}\,(1), a contradiction.
\end{proof}

\begin{proof}[Proof of Theorem~{\upshape\ref{cabz}}]
$(1)$~Let $Q$ be a Sylow $q$-subgroup of~$G$ for a prime $q\in\pi(G)$.
Since $(|G : AF(G)|, |G : BF(G)|) = 1$, we deduce that $q$ does not divide $|G : AF(G)|$ or $|G : BF(G)|$.
Therefore  without loss of generality we can assume  that $Q\leq AF(G)$.
According to Lemma~\ref{lsubm}\,(2), $A$ is submodular in $AF(G)$.
Consequently, $AF(G)\in\mathfrak{Z}$  by Lemma~\ref{lnilpnormz}\,(1) and
$Q$ is submodular in $AF(G)$. In view of Lemma~\ref{lsubm}\,(4),
$AF(G)$ is submodular in $G$. Therefore $Q$ is submodular in $G$ by Lemma~\ref{lsubm}\,(1). Thus,
every Sylow subgroup of $G$ is submodular in $G$ and $G\in\mathfrak{Z}$.

$(2)$~Assume that $G$ is a counterexample of least order. Since $\mathfrak{Z}\subset\mathfrak{D}$
by Lemma~\ref{lzc}\,(1), $G$ has a Sylow tower of supersolvable type in view of Lemma~\ref{ld}.
Let $N$ be a non-trivial normal subgroup of~$G$. In that case, $AN/N$ and $BN/N$ are
submodular in $G/N$ by Lemma~\ref{lsubm}\,(4), and
$$
G=(AN/N)(BN/N), \ AN/N\cong A/A\cap N \in\mathfrak{Z}, \
BN/N\cong B/B\cap N \in\mathfrak{Z},
$$
since $\mathfrak{Z}$ is a formation by Lemma~\ref{lzc}\,(1).
Moreover, in view of Lemma~\ref{lgf},
$$
\left(\left|(AN/N)/(AN/N)^{\mathcal{A}_1}\right|, \left|(BN/N) /(BN/N)^{\mathcal{A}_1}\right|\right) =
\left(\left|AN/A^{\mathcal{A}_1}N\right|, \left|BN /B^{\mathcal{A}_1}N\right|\right) =
$$
$$
\left(
|A/A^{\mathcal{A}_1}|/|A\cap N:A^{\mathcal{A}_1}\cap N|,
|B/B^{\mathcal{A}_1}|/|B\cap N:B^{\mathcal{A}_1}\cap N|
\right)=1
$$
Hence $G/N\in\mathfrak{Z}$ by induction.
Consequently, $G$ is a primitive group by Lemma~\ref{lprimitivef},
and $G=F(G)\rtimes M$, where $M$ is a maximal subgroup with~$M_G=1$ by Lemma~\ref{lprimitive}.
From $G\in\mathfrak{D}$, it follows that $F(G)=R$ is a Sylow $r$-subgroup of $G$ for $r=\max \pi(G)$.
In view of Lemma~\ref{lnilpnormz}\,(1), $AR$ and $BR$ belong to $\mathfrak{Z}$.
Therefore $AR$ and $BR$ are proper subgroups of $G$. In addition,
$(AR)^{\mathcal{A}_1}\in\mathfrak{N}$ and $(BR)^{\mathcal{A}_1}\in\mathfrak{N}$
by Lemma~\ref{lzc}\,(2). From $R=C_G(R)$ it follows that
$(AR)^{\mathcal{A}_1}$ and $(BR)^{\mathcal{A}_1}$ are $r$-subgroups.
Since $AR/(AR)^{\mathcal{A}_1}\in\mathcal{A}_1$, we have $A_{r'}\in\mathcal{A}_1$.
A Sylow $r$-subgroup $A_r$ of $A$ is contained in $R\in\mathfrak{A}_1$,
therefore $A\in\mathcal{A}_1$ and $A^{\mathcal{A}_1}=1$. Similarly,
$B\in\mathcal{A}_1$ and $B^{\mathcal{A}_1}=1$. Hence
$(|A|,|B|) = (|A/A^{\mathcal{A}_1}|,|B/B^{\mathcal{A}_1}|)=1$  and $G\in \mathfrak{Z}$
by Statement~(1), a contradiction.
\end{proof}

\medskip

\noindent{\bf Corollary A.1} (\cite[Theorem~3.3\,(1)]{vv16}).
{\sl
Let $A$ and $B$ be submodular subgroups of a group $G=AB$.
If $A,B\in\mathfrak{Z}$ and $(|G:A|,|G:B|)=1$, then $G\in\mathfrak{Z}$.
}

\medskip

\begin{proof}[Proof of Theorem~{\upshape\ref{cabc}}]
$(1)$~Let $X$ be a cyclic $q$-subgroup of $G$ for a prime $q\in\pi(G)$.
Since $(|G : AF(G)|, |G : BF(G)|) = 1$, we deduce that $q$ does not divide $|G : AF(G)|$ or $|G : BF(G)|$.
Consequently, without loss of generality we can assume  that $X\leq AF(G)$. By Lemma~\ref{lsubm}\,(2),
$A$ is submodular in $AF(G)$. Hence $AF(G)\in\mathfrak{C}$ by Lemma~\ref{lnilpnormz}\,(2).
Consequently, $X$ is submodular in $AF(G)$. In view of Lemma~\ref{lsubm}\,(4),
$AF(G)$ is submodular in $G$. Therefore $X$ is submodular in $G$ by Lemma~\ref{lsubm}\,(1).
Thus, every cyclic primary subgroup of $G$ is submodular in $G$ and $G\in\mathfrak{C}$.

$(2)$~Assume that $G$ is a counterexample of least order.
Since $\mathfrak{C}\subset\mathfrak{D}$ by Lemma~\ref{lzc}\,(1), $G$ has a Sylow tower of supersolvable type
in view of Lemma~\ref{ld}. Let $N$ be a non-trivial normal subgroup of $G$. In that case,
$$
(G/N)^{\mathcal{A}} \cong G^{\mathcal{A}}N/N \cong G^{\mathcal{A}}/G^{\mathcal{A}}\cap N\in\mathfrak{N}.
$$
Therefore $G/N\in\mathfrak{Z}$ by induction. According to Lemma~\ref{lprimitivef} and Lemma~\ref{lprimitive},
$G=F(G)\rtimes M$, where $M$ is a maximal subgroup with~$M_G=1$. Since $G\in\mathfrak{D}$,
we get $F(G)=R$  is a Sylow $r$-subgroup of $G$ for $r=\max\pi(G)$ and $R\in \mathfrak{A}_1$.
From $G^{\mathcal{A}}\in\mathfrak{N}$, it follows that $G^{\mathcal{A}}\leq F(G)$,
and $M\cong G/F(G)\in \mathcal{A}$. Thus, $G\in\mathcal{A}$ and $G^{\mathcal{A}}=1$.
By the choice of $G$, $G\notin\mathfrak{Z}$, therefore $G$ contains a $\{p,q\}$-subgroup $K$
such that $K/\Phi(K)\notin\mathfrak{U}_1$ by~\cite[Theorem~1.3]{ms23}. According to~\cite[VI.4.6]{hup},
there are Hall $\{p,q\}$-subgroups $A_{\{p,q\}}$ and $B_{\{p,q\}}$ of $A$ and $B$, respectively,
such that $H=A_{\{p,q\}}B_{\{p,q\}}$ is a Hall $\{p,q\}$-subgroup of~$G$.
Since $A_{\{p,q\}}=A\cap H$, $A_{\{p,q\}}$ is submodular in $H$ by Lemma~\ref{lsubm}\,(2).
Similarly, $B_{\{p,q\}}$ is submodular in $H$. In addition, $A_{\{p,q\}},B_{\{p,q\}}\in\mathfrak{C}$
and $H^{\mathcal{A}}=1$. Suppose that $H$ is a proper subgroup of $G$. In that case, $H\in\mathfrak{Z}$ by induction,
and $K\in\mathfrak{Z}$. Consequently, $K/\Phi(K)\in\mathfrak{U}_1$ by~\cite[Theorem~1.3]{ms23},
a contradiction. Hence, $G=R\rtimes Q$, where $Q$ is an abelian Sylow $q$-subgroup of $G$
that acts irreducibly on~$R$. Consequently, $Q$ is cyclic by~\cite[Theorem~6.21]{isaacs}.
According to~\cite[VI.4.6]{hup}, $Q=A_qB_q$, where $A_q$ and $B_q$ are
Sylow $q$-subgroups of $A$ and $B$, respectively.  Since $Q$ is cyclic, we deduce that
$Q=A_q$ or $Q=B_q$. Therefore  without loss of generality we can assume  that $Q=A_q$.
Since $A\in\mathfrak{C}$, we get $Q$ is submodular in $A$, and $Q$ is submodular in $G$
by Lemma~\ref{lsubm}\,(1), because $A$ is submodular in $G$. Thus, $G\in\mathfrak{Z}$.
\end{proof}

\medskip

\noindent{\bf Corollary B.1} (\cite[Theorem~3.3\,(3)]{vv16}).
{\sl
Let $A$ and $B$ be submodular subgroups of a group $G=AB$.
If $A,B\in\mathfrak{Z}$ and $G^{\mathcal{A}_1}\in \mathfrak N$,
then $G\in\mathfrak{Z}$.
}

\medskip

\noindent{\bf Corollary B.2} (\cite[Proposition~2.6]{vas15}).
{\sl
If $A$ and $B$ are nilpotent submodular subgroups of a group $G=AB$,
then $G\in \mathfrak{Z}\cap \mathfrak U$.
}

\medskip

\begin{proof}
By Lemma~\ref{labs}, $G$  is solvable. Therefore $A$ and $B$ are $\mathbb{P}$-subnormal in~$G$ by Lemma~\ref{lsmp}.
Now, $G\in \mathfrak U$ by~\cite[Theorem~4.2.1\,(1)]{tr21} and $G^\prime$ is nilpotent.
Since every subgroup of a nilpotent group is subnormal,
we have $A,B\in \mathfrak C$ in view of Lemma~\ref{lsubm}\,(6).
From $G^{\mathcal A}\le G^\prime$, it follows that $G\in \mathfrak Z$ by Theorem~\ref{cabc}\,(2).
\end{proof}

\begin{example}
In the Frobenius group $F_5=C_5\rtimes C_4$,  $A=C_5$ and $B=C_4$ are
$\mathbb{P}$\nobreakdash-subnormal, belong to $\mathfrak{Z}\subset\mathfrak{C}$ and
have coprime indices in $G$. Since $B_{F_5}=1$ and $F_5\notin\mathfrak{U}_1$,
$B$ is not submodular in $F_5$. Therefore, $F_5\in \mathcal{A}\setminus\mathfrak{Z}$.
Consequently, in Theorem~\ref{cabz} and Theorem~\ref{cabc},
the condition of submodularity of  at least one factor could not be weakened
to $\mathbb{P}$-subnormality.
\end{example}

{\small

}

\end{document}